\renewcommand{\thesubsection}{\thesection(\@roman\c@subsection)}
\newenvironment{NB}{
\color{red}{\bf NB}. \footnotesize
}{}
\newtheorem{Theorem}[equation]{Theorem}
\newtheorem{Lemma}[equation]{Lemma}
\newtheorem{Proposition}[equation]{Proposition}
\theoremstyle{definition}
\theoremstyle{remark}
\newtheorem{Remark}[equation]{Remark}
\numberwithin{equation}{section}
\newcommand{\lemref}[1]{Lemma~\ref{#1}}
\newcommand{\defeq}{\overset{\operatorname{\scriptstyle def.}}{=}}
\newcommand{\CC}{{\mathbb C}}
\newcommand{\ZZ}{{\mathbb Z}}
\newcommand{\QQ}{{\mathbb Q}}
\newcommand{\algsl}{\operatorname{\mathfrak{sl}}} % because \sl="slant"
\newcommand{\g}{{\mathfrak g}}
\renewcommand{\MR}[1]{}
\newcommand{\wt}{\operatorname{wt}}
\newcommand{\bU}{\mathbf U}
\newcommand{\M}{\mathfrak M}
\newcommand{\La}{\mathfrak L}
\newcommand{\Mreg}{\M^{\operatorname{reg}}}
\begin{document}
\title[Affine cellularity of quantum affine algebras]
{Affine cellularity of quantum affine algebras
%{\rm Preliminary Version (\today)}
}
\author{Hiraku Nakajima}
\address{Research Institute for Mathematical Sciences,
Kyoto University, Kyoto 606-8502,
Japan}
\email{nakajima@kurims.kyoto-u.ac.jp}

%\subjclass[2000]{Primary 14C05; Secondary 14D21, 14J60}
\maketitle
%\tableofcontents

\appendix
\setcounter{section}{1}
\section*{}

This is an appendix to Cui's paper \cite{Cui} showing that the
modified quantum affine algebra $\widetilde\bU = \widetilde\bU_q(\g)$
of level 0 (more precisely its quotients, BLN algebras) is affine
cellular in the sense of Koenig and Xi \cite{KX}. The proof is based
on the structure of cells of $\widetilde\bU$, studied previously in
\cite{MR2066942}, the author's joint work with Beck.
We here give a proof based on \cite[Lemma~6.17]{MR2066942}, together
with a property of the bilinear form introduced in
\cite{MR2074599}. Note that \cite[Lemma~6.17]{MR2066942} and the
bilinear form are crucial ingredients for the study of the structure
of cells in \cite{MR2066942}. In this sense the following proof is
more direct and fundamental than one in \cite{Cui}.

We also prove that cell ideals are idempotent, and hence
\cite[Th.~4.4]{KX} is applicable. Therefore BLN algebras are of finite
global dimension, and its derived category admits a stratification
whose sections are equivalent to derived categories of representation
rings of products of general linear groups.
The proof is, more or less, a simple
observation once we remember that $\widetilde\bU$ has the highest
weight theory.

We also give a remark, which explain why it is {\it natural\/} to
expect that $\widetilde\bU$ is affine cellular, in view of geometry of
quiver varieties, when the underlying affine Lie algebra $\g$ is
symmetric, i.e., an untwisted affine Lie algebra of type $ADE$.
However, it should be emphasized that convolution algebras and
$\widetilde\bU$ (or its quotients) are possibly different, and we do
not know whether convolution algebras are affine cellular or not
unfortunately.

The existence of the affine cellular structure on $\widetilde{\mathbf
  U}$ is a simple consequence of \cite{MR2066942,MR2074599}.
However a point is its usefulness and generality \cite{KX}, and hence
it is worthwhile to note that $\widetilde\bU$ is affine cellular. It
is this reason why we write this short note to emphasize this
observation again after \cite{Cui}, and to clarify where the affine
cellular structure come from for $\widetilde\bU$.

One of applications of the theory of affine cellular algebras is a
classification of simple modules \cite[\S3]{KX}. For $\widetilde\bU$,
it reproduces a well-known classification, namely simple modules are
parametrized by Drinfeld polynomials.

On the other hand, it is probably not previously known that BLN
algebras are of finite global dimension and their derived cateogories
admit stratification. Therefore it is really useful to point out that
the theory of affine celluar algebras is applicable to $\widetilde\bU$.

Our notation follows Cui's paper \cite{Cui} and the author's previous ones
\cite{MR2066942,MR2074599}, as well as \cite{MR2058969} for geometric
objects.

\subsection*{Acknowledgment}

The author thanks Dr.\ Ryosuke Kodera for comments on an earlier
version of this paper, e.g., the observation that $i_*$ below is
injective.

\subsection{Laurent polynomial valued bilinear form}

Let $\lambda = \sum_{i\in I_0} m_i\varpi_i$ be a level $0$ dominant
weight, and $V(\lambda)$ be the corresponding extremal weight module. 

In \cite[\S4]{MR2066942} we introduced a $\bU$-homomorphism
\begin{equation*}
    \Phi_\lambda\colon V(\lambda)\to
    \widetilde V(\lambda) \defeq \bigotimes V(\varpi_i)^{\otimes m_i},
\end{equation*}
sending $u_\lambda$ to the tensor product $\widetilde u_\lambda \defeq
\bigotimes u_{\varpi_i}^{\otimes m_i}$ of extremal weight vectors, and
then analyzed the structure of $V(\lambda)$ via $\Phi_\lambda$.
Each factor $V(\varpi_i)$ has a $\bU'$-linear automorphism $z_i$ of
weight $d_i\delta$. We introduce variables $z_{i,\mu}$ ($\mu=1,\dots,
m_i$) as the automorphism for the $\mu^{\mathrm{th}}$-factor
$V(\varpi_i)^{\otimes m_i}$, and regard them as automorphisms of
$\widetilde V(\lambda)$.

Let $(\ ,\ )$ be the bilinear form on $V(\lambda)$ introduced in
\cite[\S4]{MR2074599}.
Recall (\cite[\S4]{MR2074599}) that we have defined
$\QQ(q_s)[z_{i,\nu}^\pm]_{i\in I_0,\nu=1,\dots,m_i}$-valued bilinear
form $(\!(\ ,\ )\!)$ on $\widetilde V(\lambda)$, which is related to
$(\ ,\ )$ on $V(\lambda)$ via $\Phi_\lambda$ by
\begin{equation}\label{eq:2}
    (u,v) = \left[(\!(\Phi_\lambda(u), \Phi_\lambda(v) )\!)
      \prod_{i} \frac1{m_i!} 
      \prod_{\nu\neq\mu}\left( 1 - z_{i,\mu} z_{i,\nu}^{-1}\right)
    \right]_1.
\end{equation}
Here $[\ ]_1$ denotes the constant term.
From its definition, we also have
\begin{equation}\label{eq:3}
    (\!(f(z) u, v)\!) = f(z) (\!(u,v)\!), \quad
    (\!(u, g(z) v)\!) = g(z^{-1}) (\!(u,v)\!),
\end{equation}
where $f$, $g$ are Laurent polynomials in $z_{i,\mu}$, and $g(z^{-1})$
means that we replace all variables $z_{i,\mu}$ by $z_{i,\mu}^{-1}$.
        
\begin{Lemma}\label{lem:laur-polyn-valu}
    $(\!(\ ,\ )\!)$ takes values in $\bigotimes_{i\in I_0}
    \QQ(q_s)[z_{i,\mu}^\pm]_{\mu=1,\dots,m_i}^{S_{m_i}}$ on
    $V(\lambda)$, where $S_{m_i}$ is the symmetric group permuting
    $z_{i,\mu}$ ($\mu=1,\dots, m_i$).
\end{Lemma}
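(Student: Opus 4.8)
The plan is to prove the symmetry of the values of $(\!(\ ,\ )\!)$ on the image $\Phi_\lambda(V(\lambda))$: since $(\!(\ ,\ )\!)$ is already $\QQ(q_s)[z_{i,\mu}^\pm]$-valued by its definition, it will then take values in $\bigotimes_{i\in I_0}\QQ(q_s)[z_{i,\mu}^\pm]_{\mu=1,\dots,m_i}^{S_{m_i}}$ as claimed. Because $S_{m_i}$ is generated by transpositions and the groups of variables indexed by distinct $i\in I_0$ are independent, it suffices to fix one $i$ and one transposition $s=(\mu\;\nu)\in S_{m_i}$ and to prove that $(\!(\Phi_\lambda(u),\Phi_\lambda(v))\!)$ is fixed by the swap $z_{i,\mu}\leftrightarrow z_{i,\nu}$.

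First I would realize this transposition as an operator on $\widetilde V(\lambda)$. The two factors carrying $z_{i,\mu}$ and $z_{i,\nu}$ are copies of the same module $V(\varpi_i)$, so the normalized $R$-matrix furnishes a $\bU$-linear operator $R_s$ on $\widetilde V(\lambda)$ that interchanges these two factors together with their spectral parameters, so that $R_s z_{i,\mu}=z_{i,\nu}R_s$ and $R_s z_{i,\nu}=z_{i,\mu}R_s$ while fixing the remaining variables. This is the usual action of $S_{m_i}$ commuting with $\bU$, for which \cite[Lemma~6.17]{MR2066942} is the relevant input. Two properties will be decisive: $R_s$ commutes with the $\bU$-action, and $R_s$ fixes the extremal tensor $\widetilde u_\lambda=\bigotimes u_{\varpi_i}^{\otimes m_i}$, since the normalized $R$-matrix acts as the identity on the tensor product of two extremal weight vectors.

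Here the highest weight theory enters. The extremal weight module is cyclic, $V(\lambda)=\bU\,u_\lambda$, and $\Phi_\lambda$ is a $\bU$-homomorphism with $\Phi_\lambda(u_\lambda)=\widetilde u_\lambda$; hence $\Phi_\lambda(V(\lambda))=\bU\,\widetilde u_\lambda$. Since $R_s$ commutes with $\bU$ and fixes $\widetilde u_\lambda$, it fixes the whole image: $R_s\,\Phi_\lambda(u)=\Phi_\lambda(u)$ for every $u\in V(\lambda)$.

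The final, and main, step is the covariance of the form, namely $(\!(R_s\xi,R_s\eta)\!)=s\cdot(\!(\xi,\eta)\!)$, where $s\cdot$ swaps $z_{i,\mu}\leftrightarrow z_{i,\nu}$ in the coefficients. This is exactly where the property of $(\!(\ ,\ )\!)$ from \cite[\S4]{MR2074599} is needed: the form is contravariant for the relevant anti-involution and is compatible with the $z$-action as in \eqref{eq:3}, where the right slot is conjugated by $z_{i,\mu}\mapsto z_{i,\mu}^{-1}$; the spectral $R$-matrix intertwines the two orderings and, under that bar-involution, sends $z_{i,\mu}/z_{i,\nu}$ to its inverse, so that its unitarity transports the form precisely by the transposition $s$. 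Granting this, the invariance of the image gives $(\!(\Phi_\lambda(u),\Phi_\lambda(v))\!)=(\!(R_s\Phi_\lambda(u),R_s\Phi_\lambda(v))\!)=s\cdot(\!(\Phi_\lambda(u),\Phi_\lambda(v))\!)$, so the value is $s$-invariant; letting $s$ and $i$ vary yields the asserted $S_{m_i}$-invariance. I expect this covariance identity to be the only genuinely technical point: it is a clean statement relating the form to the $R$-matrix, but checking it requires unwinding the definition of $(\!(\ ,\ )\!)$ as an iterated tensor-product form and matching the normalization of $R_s$ with the bar-involution appearing in \eqref{eq:3}.
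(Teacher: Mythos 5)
Your route is genuinely different from the paper's. The paper disposes of the lemma in two lines: by bilinearity it suffices to check the values on global basis elements $b,b'\in\mathscr B(\lambda)$, and their symmetry is read off from \cite[Th.~4.16]{MR2066942}, where the expansion of $\Phi_\lambda(G(b))$ with symmetric Laurent polynomial coefficients was already established. You instead reconstruct the mechanism underlying that theorem: the $S_{m_i}$-action on $\widetilde V(\lambda)$ by normalized $R$-matrices, the normalization $R_s\widetilde u_\lambda=\widetilde u_\lambda$, $\bU$-linearity of $R_s$, and cyclicity $V(\lambda)=\bU u_\lambda$, so that the image of $\Phi_\lambda$ is pointwise fixed by $R_s$. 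That skeleton is sound, and it has the merit of showing where the symmetry comes from rather than quoting a structural result.

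However, there is a genuine gap, and you flag it yourself: the covariance identity $(\!(R_s\xi,R_s\eta)\!)=s\cdot(\!(\xi,\eta)\!)$ carries the entire weight of the argument, and you only ``grant'' it. It does not follow from \eqref{eq:3}, which constrains the form under multiplication by Laurent polynomials, not under the $R$-matrix; proving it requires unwinding the definition of $(\!(\ ,\ )\!)$ in \cite[\S4]{MR2074599} and verifying that the normalized $R$-matrix is unitary/self-adjoint for this form with the inversion $z_{i,\mu}\mapsto z_{i,\mu}^{-1}$ acting on the second slot --- a verification of roughly the same depth as the input the paper cites, so as written your argument is a reduction rather than a proof. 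A secondary point you should make explicit: the normalized $R$-matrix has poles, so $R_s$ exists only after extending scalars to $\QQ(q_s)(z_{i,\mu})$; the conclusion survives because $(\!(\Phi_\lambda(u),\Phi_\lambda(v))\!)$ is a priori a Laurent polynomial and $s$-invariance need only be checked as an identity of rational functions, but the operator $R_s$ ``on $\widetilde V(\lambda)$'' itself is not defined. With the covariance identity supplied and the localization handled, your argument would give a self-contained alternative to the citation of \cite[Th.~4.16]{MR2066942}.
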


\begin{proof}
    It is enough to show that $(G(b), G(b'))$ is symmetric for $b$,
    $b'\in\mathscr B(\lambda)$. The assertion is clear from
    \cite[Th.~4.16]{MR2066942}.
\end{proof}

\begin{Remark}
    When $\mathfrak g$ is symmetric, the bilinear form $(\!(\ ,\ )\!)$
    is coming from the intersection pairing on the equivariant
    $K$-theory of quiver varieties by \cite{VV-can}. Therefore the
    existence of $(\!(\ ,\ )\!)$ and \lemref{lem:laur-polyn-valu} are
    apparent in this context.
\end{Remark}

\subsection{Affine cellularity}

Let $\widetilde\bU$ be the modified quantum affine algebra of level
$0$. For a $\lambda$ as above, let $\widetilde\bU[{}^>\lambda]$ be
the two sided ideal consisting of all elements $x\in\widetilde\bU$
acting on $V(\lambda')$ by $0$ for any $\lambda'\not>\lambda$. We define $\widetilde\bU[{}^\ge\lambda]$ in the same way. We thus have a chain of two sided ideals in $\widetilde\bU$ for various $\lambda$'s.

In \cite[\S6]{MR2066942} we showed that
$\widetilde\bU[{}^\ge\lambda]$, $\widetilde\bU[{}^>\lambda]$ are
compatible with the global crystal base of $\widetilde\bU$, and give a
description of the induced base of $\widetilde\bU[\lambda] =
\widetilde\bU[{}^\ge\lambda]/\widetilde\bU[{}^>\lambda]$. In
particular, it was shown that the induced base (or the underlying {\it
  abstract\/} crystal) is parameterized by
\begin{equation}\label{eq:4}
    \mathscr B_W(\lambda)\times\operatorname{Irr}G_\lambda\times
    \mathscr B_W(\lambda),
\end{equation}
where $\mathscr B_W(\lambda)$ is a certain finite set, $G_\lambda =
\prod_i GL(m_i)$, and $\operatorname{Irr}G_\lambda$ is the set of
irreducible representations of $G_\lambda$.
The corresponding global base elements are of the form
\begin{equation*}
    G_\lambda(b,s,b') = G(b) S G(b')^\# \bmod \bU[{}^>\lambda],
\end{equation*}
where $G(b)$ (resp.\ $S$) is the global base element of
$\widetilde\bU$ corresponding to $b\in \mathscr B_W(\lambda)$ (resp.\
$s\in\operatorname{Irr}G_\lambda$), and ${}^{\#}$ is a certain
anti-involution of $\widetilde\bU$.

Moreover the product $\mathscr B_W(\lambda)\times\operatorname{Irr}
G_\lambda$ of the first and second factor in \eqref{eq:4} is
identified with the underlying set $\mathscr B(\lambda)$ of the global
base of $V(\lambda)$ by
\begin{equation*}
    (b,s) \mapsto G(b)S u_\lambda\in V(\lambda).
\end{equation*}
Similarly the product of the second and third factor gives also the
global base of $V(\lambda)$ by $(s,b') \mapsto (S G(b')^\#)^\# =
G(b')^\# S^\#$, and $S^\#$ corresponds to the dual representation of
$s$.

Multiplication of two global base elements are expressed by the
bilinear form $(\ ,\ )$ on $V(\lambda)$ by
\begin{equation}\label{eq:1}
    \begin{split}
    & G_\lambda(b_1,s_1,b_1') G_\lambda(b_2,s_2,b_2')
\\
    =\; & q^n
    \sum_{s''\in\operatorname{Irr} G_\lambda}
    (G(b_2)S_2 u_\lambda, G(b_1') S'' u_\lambda)
    G(b_1) S_1 S'' G(b_2')^\# \bmod \Tilde{\mathbf U}[{}^>\lambda],
    \end{split}
\end{equation}
where $n= (\wt b_1', 2\lambda + \wt b_1')/2$. See
\cite[Lemma~6.17]{MR2066942}. Our goal is to show that this
immediately implies the affine cellularity thanks to a reformulation
of $(\ ,\ )$ in the previous subsection.

From \eqref{eq:2}, we have
\begin{equation*}
    \begin{split}
%    \sum_{s''\in\operatorname{Irr} G_\lambda}
    &(G(b_2)S_2 u_\lambda, G(b_1') S'' u_\lambda)
\\
    = \; & \left[(\!( G(b_2) S_2 u_\lambda, G(b_1') S'' u_\lambda)\!)
%         s_2(z) s''(z^{-1}) 
         \prod_{i} \frac1{m_i!} 
      \prod_{\nu\neq\mu}\left( 1 - z_{i,\mu} z_{i,\nu}^{-1}\right)
       \right]_1
\\
    = \; & \left[(\!( G(b_2) u_\lambda, G(b_1') u_\lambda)\!)
         s_2(z) s''(z^{-1}) 
         \prod_{i} \frac1{m_i!} 
      \prod_{\nu\neq\mu}\left( 1 - z_{i,\mu} z_{i,\nu}^{-1}\right)
       \right]_1,
    \end{split}
\end{equation*}
where we have used \eqref{eq:3} in the second equality.

By \cite[Chap.~VI, \S9]{Mac}, 
\begin{equation*}
    \left[ f(z) g(z^{-1}) \prod_{\mu\neq\nu} \left( 1 - z_\mu z_\nu^{-1}\right)
      \right]_1
\end{equation*}
is the standard inner product on the symmetric polynomials $f$, $g$ of 
$m$-variables $z = (z_1,\dots,z_m)$.
Since Schur functions gives an orthonormal base, we have
\begin{equation*}
    \sum_{s''\in\operatorname{Irr} G_\lambda}
    (G(b_2)S_2 u_\lambda, G(b_1') S'' u_\lambda) s''(z)
    = (\!(G(b_2) u_\lambda, G(b_1') u_\lambda)\!) s_2(z).
\end{equation*}

Therefore the right hand side of \eqref{eq:1} is
\begin{equation*}
    q^n
    G(b_1) S_1 S_2 (\!(G(b_2) u_\lambda, G(b_1') u_\lambda)\!)
    G(b_2')^\# \bmod \widetilde\bU[{}^>\lambda].
\end{equation*}
This equality means that $\widetilde\bU[\lambda]$ is a generalized
algebra over $R(G_\lambda)$, the representation ring of $G_\lambda$,
where the bilinear form $\psi$ (appeared in \cite[Prop.~2.2]{KX}) is
$(\!(\bullet u_\lambda,\bullet u_\lambda)\!)$. The other ingredients,
the anti-involution $i$ on $\widetilde\bU$ is ${}^{\#}$, and $\sigma$
is the induced involution on $R(G_\lambda)$, given by the dual
representation.

Therefore $\widetilde\bU$ satisfies the axioms of affine cellular
algebras from \cite{KX} except that the chain of two-sided ideals has
the infinite lengths. If we want to cut out to a finite chain, we
just need to consider quotients of $\widetilde\bU$, called {\it BLN
  algebras\/} as in \cite{BLN}.

\subsection{Idempotents}

By \cite[Th.~3.12]{KX} the affine cellularity of $\widetilde\bU$ gives
us a classification of its simple modules. More precisely, isomorphism
classes of simple modules of $\widetilde\bU$ are parametrized by the
{\it open\/} subset of the set of maximal ideals $\mathfrak
m\in\operatorname{MaxSpec} R(G_\lambda)$ such that $(\!(\
,\ )\!)$ is not identically zero on $R(G_\lambda)/\mathfrak m$.

On the other hand, a classification of simple modules of
$\widetilde\bU$ is well-known: it is the same as those of the usual
quantum affine algebra $\bU$, and is given by Drinfeld polynomials. It
means that simple modules correspond to the whole
$\operatorname{MaxSpec} R(G_\lambda)$, {\it not\/} its proper open
subset. We directly check this assertion in this section.

It is clear that that the key is the value of $(\!(\ ,\ )\!)$ at the
extremal vector $u_\lambda$, as the Drinfeld polynomial is given by
eigenvalues of a commuting family of elements in $\bU$.

One of the defining property of $(\ ,\ )$ is
\begin{equation*}
    (u_\lambda,u_\lambda) = 1.
\end{equation*}
From the definition of $(\!(\ ,\ )\!)$, we have $(\!(u_\lambda,
u_\lambda)\!) = 1$. Therefore the pairing $(\!(\ ,\ )\!)$ is never
zero, hence the condition $(\!(\ ,\ )\!)$ is nonzero on
$R(G_\lambda)/\mathfrak m$ is vacuous.

Thanks to \cite[Th.~4.1(1)]{KX}, this condition is equivalent to that
all cell ideals $\widetilde\bU[{}^>\lambda]$ are idempotent.

There is the distinguished element in $\mathscr B(\lambda)$,
corresponding to $u_\lambda$. (We may assume that the
$\operatorname{Irr} G_\lambda$-component is the trivial representation
$1$ in the description $\mathscr B(\lambda) = \mathscr
B_W(\lambda)\times\operatorname{Irr}G_\lambda$.)
As the global base element in $\widetilde\bU$, it is the projector
$a_\lambda$ to the weight $\lambda$-space. In particular, it is an
idempotent.

\begin{Remark}
In the geometric picture, $a_\lambda$ is the class of the diagonal
$\Delta\M(\lambda,\lambda)\subset Z(\lambda)$, where
$\M(\lambda,\lambda)$ is a distinguished component of $\M(\lambda)$,
consisting of a single point.
\end{Remark}

Therefore two conditions required in \cite[Th.~4.4]{KX} are satisfied
for $\widetilde\bU$, or more precisely for its quotients, BLN
algebras. Hence
\begin{Theorem}
    A BLN algebra is of finite global dimension, and its derived
    category admits a stratification whose sections are equivalent to
    derived categories of $R(G_\lambda)$.
\end{Theorem}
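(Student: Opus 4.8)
The plan is to verify that the two hypotheses of \cite[Th.~4.4]{KX} hold for each BLN algebra, since that theorem packages exactly the two conclusions we want (finite global dimension together with the derived stratification). The preceding subsections have done essentially all the work, so my proof will be a matter of assembling the pieces and confirming that the quoted theorem applies. The two required conditions are: first, that the affine cellular structure exhibited in the previous subsection genuinely endows $\widetilde\bU$ (hence its quotients) with a chain of two-sided ideals whose subquotients $\widetilde\bU[\lambda]$ are generalized matrix algebras over $R(G_\lambda)$ with the bilinear form $\psi = (\!(\bullet u_\lambda, \bullet u_\lambda)\!)$; and second, that each cell ideal is idempotent.

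First I would recall that the affine cellularity of $\widetilde\bU$ was established in the subsection ``Affine cellularity'': the multiplication formula \eqref{eq:1}, combined with the reformulation of $(\ ,\ )$ via the Laurent-polynomial-valued form and the orthonormality of Schur functions, shows that $\widetilde\bU[\lambda]$ is a generalized matrix algebra over $R(G_\lambda)$. The only defect of the full algebra $\widetilde\bU$ is that its chain of two-sided ideals indexed by the dominant weights $\lambda$ has infinite length; passing to a BLN quotient as in \cite{BLN} truncates this to a finite chain, so a BLN algebra is affine cellular in the strict sense of \cite{KX}, with each cell ideal of the required generalized-matrix form.

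Next I would invoke the idempotency of the cell ideals, which is the content of the subsection ``Idempotents''. The key input is the normalization $(u_\lambda, u_\lambda) = 1$, which forces $(\!(u_\lambda, u_\lambda)\!) = 1$ and hence shows the form $\psi$ is never identically zero on any quotient $R(G_\lambda)/\mathfrak m$. By \cite[Th.~4.1(1)]{KX} this non-degeneracy is equivalent to the statement that all cell ideals $\widetilde\bU[{}^>\lambda]$ are idempotent; concretely, the distinguished global base element $a_\lambda$, the projector to the weight-$\lambda$ space, is an honest idempotent witnessing this. With both hypotheses of \cite[Th.~4.4]{KX} now verified for each BLN algebra, the theorem yields finite global dimension and the promised stratification of the derived category, whose sections are the derived categories of $R(G_\lambda)$.

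The genuinely delicate point — though it has already been handled in the body of the note — is the idempotency condition, which in general fails for affine cellular algebras and is precisely what separates the well-behaved case of \cite[Th.~4.4]{KX} from the general theory. For a typical affine cellular algebra the bilinear form $\psi$ can vanish on some residue field $R(G_\lambda)/\mathfrak m$, and then \cite[Th.~3.12]{KX} would parametrize simple modules only by a proper open subset of $\operatorname{MaxSpec} R(G_\lambda)$. Here the representation-theoretic fact that simple modules of $\widetilde\bU$ correspond to the whole maximal spectrum (equivalently, to Drinfeld polynomials) is exactly matched by the non-vanishing $(\!(u_\lambda, u_\lambda)\!) = 1$; so the main obstacle in a direct approach is secured by the defining normalization of the bilinear form, and no further work is needed to complete the proof.
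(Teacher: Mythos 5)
Your proposal is correct and follows essentially the same route as the paper: both verify the two hypotheses of \cite[Th.~4.4]{KX} by citing the generalized matrix algebra structure on $\widetilde\bU[\lambda]$ over $R(G_\lambda)$ (via the multiplication formula \eqref{eq:1} and the Schur-orthonormality reformulation of the bilinear form), truncating to a finite cell chain by passing to BLN quotients, and deducing idempotency of the cell ideals from $(\!(u_\lambda,u_\lambda)\!)=1$ together with \cite[Th.~4.1(1)]{KX} and the idempotent $a_\lambda$. Your closing remark correctly identifies the idempotency condition, secured by the normalization $(u_\lambda,u_\lambda)=1$, as the genuinely delicate point, which matches the emphasis of the paper's Idempotents subsection.
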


\subsection{Approach via quiver varieties}

Suppose again that $\g$ is symmetric. We follow the notation in
\cite{MR2058969}. Let ${}_{\mathcal A}\widetilde\bU$ be the
$\ZZ[q,q^{-1}]$-form of $\widetilde\bU$.
Let $Z(\lambda)$ denote the analog of the Steinberg variety for quiver
varieties, which is the fiber product
$\M(\lambda)\times_{\M_0(\lambda)} \M(\lambda)$. The algebra
homomorphism $\Phi_\lambda\colon {}_{\mathcal A}\widetilde\bU\to
K^{\CC^*\times G_\lambda}(Z(\lambda))$, constructed in \cite{Na-qaff}
factors through ${}_{\mathcal A}\widetilde\bU/ {}_{\mathcal
  A}\widetilde\bU[{}^>\lambda]$.
(The notation $\Phi_\lambda$ has been used already above, but it should be clear from the context.)
Remark that it has been shown that $\Phi$ is an algebra homomorphism
to $K^{\CC^*\times G_\lambda}(Z(\lambda))/\mathrm{torsion}$ in
\cite{Na-qaff}. The proof is a reduction to the case
$\g_0=\algsl_2$. The reduction argument works without dividing by
torsion. Therefore it is enough to check the relation holds for
$\g_0=\algsl_2$.
In this case, the corresponding quiver varieties are
cotangent bundles to Grassmannians (of various dimensions). It is
known that $K^{\CC^*\times G_\lambda}(Z(\lambda))$ is free
\cite[Cor.~6.2.6]{CG}. Therefore it is unnecessary to divide by torsion.

Let us consider the open embedding $j\colon Z^\circ(\lambda) \to
Z(\lambda)$, the inverse image of $\M_0(\lambda)\setminus \{0\}$ in
the fiber product. The complement $Z(\lambda)\setminus
Z^\circ(\lambda)$ is $\La(\lambda)\times\La(\lambda)$, where
$\La(\lambda)$ is the lagrangian subvariety in $\M(\lambda)$, the
inverse image of $0$ under $\M(\lambda)\to \M_0(\lambda)$. Let
$i\colon \La(\lambda)\times\La(\lambda)\to Z(\lambda)$ be the closed
immersion.
We have pull-back $j^*$ and push-forward $i_*$ homomorphisms, which
fits in the commutative diagram such that both horizontal sequences
are exact:
    \begin{equation}\label{eq:5}
        \begin{CD}
      @.
        K^{\CC^*\times G_\lambda}(\La(\lambda)\times\La(\lambda))
        @>{i_*}>>
        K^{\CC^*\times G_\lambda}(Z(\lambda))
        @>{j^*}>> K^{\CC^*\times G_\lambda}(Z^\circ(\lambda)) @>>> 0
\\
      @. @A{\spadesuit}AA @AA{\Phi_\lambda}A @AAA @.
\\
      0 @>>> 
      {}_{\mathcal A}\widetilde\bU[{\lambda}] = 
      {}_{\mathcal A}\widetilde\bU[{}^\ge\lambda]
      /{}_{\mathcal A}\widetilde\bU[{}^{> \lambda}]
      @>>>
      {}_{\mathcal A}\widetilde\bU/{}_{\mathcal A}\widetilde\bU[{}^{> \lambda}] @>>>
      {}_{\mathcal A}\widetilde\bU/{}_{\mathcal A}\widetilde\bU[{}^\ge\lambda] @>>> 0.
        \end{CD}
    \end{equation}
%    (We suppress `/torsion' for brevity.)

    The statement that the restriction of $\Phi_\lambda$ to ${}_{\mathcal
      A}\widetilde\bU[\lambda]$ is proved as follows: As we have
    explained above, ${}_{\mathcal A}\widetilde\bU[\lambda]$ has a
    base consisting of elements $G_\lambda(b,s,b') = G(b) S
    G(b')^\#\bmod\bU[{}^>\lambda]$. The element $S$ is identified
    with an irreducible representation of $G_\lambda$, and is sent to
    the class $R(\CC^*\times G_\lambda) = K^{\CC^*\times
      G_\lambda}(\La(\lambda,\lambda)\times\La(\lambda,\lambda))$,
    where $\La(\lambda,\lambda) = \M(\lambda,\lambda)$ is the
    distinguished component of $\M(\lambda)$, consisting of a single
    point, mentioned above. Then from the definition of the
    convolution product, $K^{\CC^*\times
      G_\lambda}(\La(\lambda)\times\La(\lambda))$ is a bimodule, and
    hence the assertion follows. And it also follows that
    $j^*\circ\Phi$ factors through ${}_{\mathcal
      A}\widetilde\bU/{}_{\mathcal A}\widetilde\bU[{}^\ge\lambda]$.

    Moreover, K\"unneth formula holds for $\La(\lambda)$
    \cite[Th.~3.4]{Na-tensor}, hence 
\[  
K^{\CC^*\times G_\lambda}(\La(\lambda)\times\La(\lambda))\cong
    K^{\CC^*\times G_\lambda}(\La(\lambda))\otimes_{R(\CC^*\times G_\lambda)}
    K^{\CC^*\times G_\lambda}(\La(\lambda)).
\]
Since $K^{\CC^*\times G_\lambda}(\La(\lambda))$ is isomorphic to the
extremal weight module $V(\lambda)$, it follows that $\spadesuit$ is
an isomorphism.
Therefore ${}_{\mathcal A}\widetilde\bU[{\lambda}]$ has a structure of
a generalized matrix algebra, and hence we see why $\widetilde\bU$ is
affine cellular.

Let us turn to the convolution algebra $K^{\CC^*\times
  G_\lambda}(Z(\lambda))$.
We have a stratification $\M_0(\lambda) =
\bigsqcup\Mreg_0(\mu,\lambda)$, where $\mu$ runs the set of (level 0)
dominant weights with $\mu\le\lambda$. The closure order is the
opposite of the dominance order.
Let $Z(\lambda) = \bigsqcup Z(\lambda)_\mu$ be the corresponding
decomposition of $Z(\lambda)$, and let $Z(\lambda)_{\ge\mu} =
\bigsqcup_{\mu'} Z(\lambda)_{\mu'\ge\mu}$, $Z(\lambda)_{>\mu} =
\bigsqcup_{\mu'} Z(\lambda)_{\mu'>\mu}$. Both are closed subvarieties
in $Z(\lambda)$. And $Z(\lambda)_\mu = Z(\lambda)_{\ge\mu}\setminus Z(\lambda)_{>\mu}$ is open in $Z(\lambda)_{\ge\mu}$.
We consider a variant of \eqref{eq:5}:
\begin{equation*}
    \begin{CD}
 @. K^{\CC^*\times G_\lambda}(Z(\lambda)_{>\mu}) @>{i_*}>>
 K^{\CC^*\times G_\lambda}(Z(\lambda)_{\ge\mu}) @>>>
 K^{\CC^*\times G_\lambda}(Z(\lambda)_{\mu}) @>>> 0
\\
      @. @AAA @AAA @AAA @.
\\
      0 @>>> 
      {}_{\mathcal A}\widetilde\bU[{}^>\mu]/
      {}_{\mathcal A}\widetilde\bU[{}^>\lambda]
      @>>>
      {}_{\mathcal A}\widetilde\bU[{}^\ge\mu]/
      {}_{\mathcal A}\widetilde\bU[{}^>\lambda]
      @>>>
      {}_{\mathcal A}\widetilde\bU[\mu] @>>> 0.
    \end{CD}
\end{equation*}

\begin{Lemma}
    $i_*$ becomes injective, if we divide domain and target by
    $R(\CC^*)$-torsion.
\end{Lemma}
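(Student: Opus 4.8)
The plan is to deduce the injectivity from the localization (concentration) theorem in $\CC^*$-equivariant $K$-theory, the geometric input being that the stratum created in passing from $Z(\lambda)_{>\mu}$ to $Z(\lambda)_{\ge\mu}$ carries no $\CC^*$-fixed points. First I would observe that it is enough to prove that $i_*$ becomes injective after $\otimes_{R(\CC^*)}\QQ(q)$, where $\QQ(q)=\operatorname{Frac}R(\CC^*)$: since $\QQ(q)$ is flat over $R(\CC^*)=\ZZ[q,q^{-1}]$, this says exactly that $\ker i_*$ is $R(\CC^*)$-torsion, and a short diagram chase then yields injectivity of the induced map on the torsion-free quotients.

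To control $\ker i_*$ I would use the localization long exact sequence of the pair $(Z(\lambda)_{\ge\mu},\,Z(\lambda)_\mu)$ with closed complement $Z(\lambda)_{>\mu}$,
\[
    \cdots \to K^{\CC^*\times G_\lambda}_1(Z(\lambda)_\mu)
    \xrightarrow{\partial}
    K^{\CC^*\times G_\lambda}(Z(\lambda)_{>\mu})
    \xrightarrow{i_*}
    K^{\CC^*\times G_\lambda}(Z(\lambda)_{\ge\mu})
    \to \cdots,
\]
in which $\ker i_* = \operatorname{im}\partial$. Tensoring with $\QQ(q)$ keeps the sequence exact, so it suffices to show that $K^{\CC^*\times G_\lambda}_*(Z(\lambda)_\mu)$ is $R(\CC^*)$-torsion; then $\partial\otimes\QQ(q)=0$ and $i_*\otimes\QQ(q)$ is injective.

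By the concentration theorem for the torus $\CC^*$ (applied while retaining the commuting $G_\lambda$-equivariance; see, e.g., \cite{CG}), restriction to the $\CC^*$-fixed locus is an isomorphism after inverting the nonzero elements of $R(\CC^*)$; in particular $K^{\CC^*\times G_\lambda}_*(Z(\lambda)_\mu)$ is $R(\CC^*)$-torsion as soon as $Z(\lambda)_\mu^{\CC^*}=\emptyset$. Thus everything reduces to the geometric claim that the stratum $Z(\lambda)_\mu$ has no $\CC^*$-fixed points. For this, recall that $\CC^*$ acts on the affine variety $\M_0(\lambda)$ by the contracting dilation, whose unique fixed point is $0$; under the closure order (the opposite of dominance) the origin is the most degenerate stratum $\Mreg_0(\lambda,\lambda)=\{0\}$, indexed by $\mu=\lambda$. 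Hence for every $\mu\neq\lambda$ the $\CC^*$-invariant subset $\Mreg_0(\mu,\lambda)$ avoids $0$ and so is fixed-point free, and since $Z(\lambda)\to\M_0(\lambda)$ is $\CC^*$-equivariant with $Z(\lambda)_\mu$ lying over $\Mreg_0(\mu,\lambda)$, any fixed point of $Z(\lambda)_\mu$ would map to a fixed point downstairs; therefore $Z(\lambda)_\mu^{\CC^*}=\emptyset$. The case $\mu=\lambda$ is vacuous, as there is no $\mu'>\lambda$ and so $Z(\lambda)_{>\lambda}=\emptyset$.

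The principal obstacle is not the homological bookkeeping but pinning down the two geometric inputs: that the $\CC^*$ recording the variable $q$ is exactly the dilation contracting $\M_0(\lambda)$ to $0$ (so that only the bottom stratum meets the fixed locus), and that Thomason concentration is legitimately available in the mixed $\CC^*\times G_\lambda$-equivariant theory, i.e.\ that one may localize at $R(\CC^*)$ alone while keeping $G_\lambda$-equivariance. Granting these, the argument is formal.
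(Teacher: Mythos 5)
Your argument is correct and takes essentially the same route as the paper: both rest on the two inputs that the contracting $\CC^*$-action fixes only $0\in\M_0(\lambda)$, so that the $\CC^*$-fixed locus of $Z(\lambda)$ lies in the bottom stratum $Z(\lambda)_\lambda=\La(\lambda)\times\La(\lambda)$ while every stratum $Z(\lambda)_\mu$ with $\mu\neq\lambda$ is fixed-point free, together with Thomason concentration after inverting the nonzero elements of $R(\CC^*)=\ZZ[q,q^{-1}]$. The differences are only bookkeeping: the paper concludes directly that $i_*$ becomes an isomorphism after $\otimes\,\operatorname{Frac}(R(\CC^*))$ because source and target have the same fixed locus, whereas you reach injectivity through the localization long exact sequence by showing $K^{\CC^*\times G_\lambda}_*(Z(\lambda)_\mu)$ is $R(\CC^*)$-torsion (one cosmetic slip: the concentration map is the pushforward from the fixed locus, not a restriction, but this is immaterial since you only invoke the empty-fixed-locus case).
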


\begin{proof}
    \begin{NB}
    By \cite[Th.~6.1.22]{CG}, $K^{\CC^*\times G_\lambda}(\ )$ is the
    Weyl group invariant part of $K^{\CC^*\times T_\lambda}(\ )$,
    where $T_\lambda$ is a maximal torus of $G_\lambda$. Therefore it
    is enough to show the claim for $K^{\CC^*\times T_\lambda}(\
    )$. 
    \end{NB}%
    If we consider the fixed point $Z(\lambda)^{\CC^*}$, it is
    contained in $Z(\lambda)_\lambda =
    (\La(\lambda)\times\La(\lambda))^{\CC^*}$. It is because the
    action of $\CC^*$ on an affine space containing $\M_0(\lambda)$
    has only positive weights, and hence $0$ is the only fixed point
    in $\M_0(\lambda)$. Since the projective morphism
    $\pi\colon\M(\lambda)\to \M_0(\lambda)$ is $\CC^*$-equivariant,
    the assertion follows.

    Now the localization theorem in the equivariant $K$-theory implies
    that $i_*$ becomes an isomorphism if we localize the equivariant
    $K$-group at $\mathrm{Frac}(R(\CC^*))$, the fractional field of
    $R(\CC^*) = \ZZ[q,q^{-1}]$. We are done, as the kernel of $\bullet\to
    \bullet\otimes \mathrm{Frac}(R(\CC^*))$ is the torison part.
    \begin{NB}
    (In fact, $K^{\CC^*\times T_\lambda}(\La(\lambda))$ is known to be
    free \cite[\S7]{Na-qaff}.)
    \end{NB}%
\end{proof}

We ignore the torsion part hereafter.

The homomorphism $K^{\CC^*\times G_\lambda}(Z(\lambda)_{\ge\mu})\to
K^{\CC^*\times G_\lambda}(Z(\lambda))$, which is injective by above,
is compatible with the convolution product. Therefore $K^{\CC^*\times
  G_\lambda}(Z(\lambda)_{\ge\mu})$ is a two-sided ideal. The same holds for $K^{\CC^*\times G_\lambda}(Z(\lambda)_{>\mu})$.
Therefore we need to analyze $K^{\CC^*\times
  G_\lambda}(Z(\lambda)_{\mu})$ to show that $K^{\CC^*\times
  G_\lambda}(Z(\lambda))$ is affine cellular.

For the original Steinberg variety of type $A$, we consider a short
exact sequence
\begin{equation*}
    0\to K^{\CC^*\times G}(Z_{\overline{O}\setminus O})
    \to K^{\CC^*\times G}(Z_{\overline{O}})
    \to K^{\CC^*\times G}(Z_{{O}}) \to 0,
\end{equation*}
where $O$ is a nilpotent orbit and $\overline{O}$ is its closure.
(See \cite{MR2235345} for the relevance of the above short exact
sequence for the structure of cells of affine Hecke algebras of type
$A$.)
In this case, $Z_O$ is a fiber bundle over $O$ whose fiber at $e$ is
$\mathcal B_e\times\mathcal B_e$, where $\mathcal B_e$ is the Springer
fiber at $e$.
In the quiver variety case, $Z(\lambda)_\mu$ is a fiber bundle whose
fibers are isomorphic to $\La(\mu)\times\La(\mu)$.
However the base $\M^{\operatorname{reg}}_0(\mu,\lambda)$ is not an
orbit of $G_\lambda$, and hence we need a further study.

If we replace equivariant $K$-group by equivariant homology groups, we
still have a similar diagram, where the bottom row is replaced by
Yangian. But we still need to analyze $H^{\CC^*\times
  G_\lambda}_*(Z(\lambda)_\mu)$.
  S.~Kato has studies this problem in his study of extension algebra
  \cite{Kato-ext}. In his case, $Z^\circ(\lambda)$ is replaced by an
  union of orbits, and hence the picture is similar to the case of the original Steinberg variety.
We will come back to this problem in near future.

It is also clear from our perspective that we need to care extra
stratum, not of a form $\Mreg_0(\mu,\lambda)$ in $\M_0(\lambda)$ for
quiver varieties of infinite types. For example, symmetric powers of
$\CC^2/\Gamma$ appear for affine types. Here $\Gamma$ is a finite
subgroup of $SU(2)$.

\begin{NB}
Let us give a direct prove that $\spadesuit$ is an isomorphism, based
on well-known properties on equivariant $K$-theory of quiver
varieties. The proof seems to be applicable to much more general situation, say convolution algebras for graded quiver varieties.

\begin{Proposition}
    $\spadesuit$ is an isomorphism.
\end{Proposition}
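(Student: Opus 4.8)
\begin{proof}
Set $R\defeq R(\CC^*\times G_\lambda)$. The plan is to compute both sides of $\spadesuit$ as $R$-modules from the two basic structural facts about the equivariant $K$-theory of quiver varieties, and then to check that $\spadesuit$ carries a basis to a basis. The first fact is $K^{\CC^*\times G_\lambda}(\La(\lambda))\cong V(\lambda)$ \cite{Na-qaff}, a free $R$-module with basis $\{G(b)u_\lambda\}_{b\in\mathscr B_W(\lambda)}$; write $r\defeq|\mathscr B_W(\lambda)|$. The second is the K\"unneth formula \cite[Th.~3.4]{Na-tensor}, giving
\[
    K^{\CC^*\times G_\lambda}(\La(\lambda)\times\La(\lambda))\cong
    V(\lambda)\otimes_R V(\lambda).
\]
Thus the target is free over $R$ of rank $r^2$, with basis $\{G(b)u_\lambda\otimes G(b')u_\lambda\}_{b,b'}$. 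On the source, the cell basis $\{G_\lambda(b,s,b')\}$ exhibits ${}_{\mathcal A}\widetilde\bU[\lambda]$ as a free $R(\CC^*)$-module indexed by $\mathscr B_W(\lambda)\times\operatorname{Irr}G_\lambda\times\mathscr B_W(\lambda)$.

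The heart of the argument would be to unwind $\spadesuit$ through the K\"unneth decomposition and the convolution bimodule structure of $K^{\CC^*\times G_\lambda}(\La(\lambda)\times\La(\lambda))$ over $K^{\CC^*\times G_\lambda}(\La(\lambda))=V(\lambda)$. Since $S$ is supported on the single-point component $\La(\lambda,\lambda)=\M(\lambda,\lambda)$, where it represents the character $s\in R(G_\lambda)\subset R$, while $G(b)$ acts on the left and $G(b')^\#$ on the right, I would establish
\[
    \spadesuit\bigl(G(b)\,S\,G(b')^\#\bigr)=s\cdot\bigl(G(b)u_\lambda\otimes G(b')u_\lambda\bigr).
\]
As $(b,s,b')$ ranges over the indexing set, with $s$ running over the $\ZZ$-basis $\operatorname{Irr}G_\lambda$ of $R(G_\lambda)$, the right-hand sides form an $R(\CC^*)$-basis of $V(\lambda)\otimes_R V(\lambda)$. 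Hence $\spadesuit$ sends a basis to a basis and is an isomorphism.

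If in addition one wants $\spadesuit$ to be an isomorphism of \emph{algebras}, one further identifies the convolution product in K\"unneth coordinates with the generalized matrix product $(v_1\otimes w_1)(v_2\otimes w_2)=\la w_1,v_2\ra\,v_1\otimes w_2$, where $\la\ ,\ \ra$ is the intersection pairing on $K^{\CC^*\times G_\lambda}(\La(\lambda))$; for $\g$ symmetric this pairing is $(\!(\ ,\ )\!)$ by \cite{VV-can}, which is exactly the form governing the product on the source side through \eqref{eq:1}. Throughout one works modulo $R(\CC^*)$-torsion, as agreed above.

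The step I expect to be the main obstacle is the displayed identity of the second paragraph: making the abstract K\"unneth isomorphism, the geometric realization $K^{\CC^*\times G_\lambda}(\La(\lambda))=V(\lambda)$, and the convolution left and right actions mutually compatible, so that the cell element $G(b)\,S\,G(b')^\#$ really goes to the pure tensor $s\cdot(G(b)u_\lambda\otimes G(b')u_\lambda)$ with the correct dualization on the right-hand factor and the correct grading twist. This bookkeeping --- and, for the algebra statement, the identification of the intersection pairing with $(\!(\ ,\ )\!)$ via \cite{VV-can}, the only place the symmetry of $\g$ is used --- is what makes the argument work, and it is also what would have to be replaced by a direct geometric construction of the pairing in order to extend the proof to convolution algebras for graded quiver varieties beyond the symmetric case.
\end{proof}
\end{NB}
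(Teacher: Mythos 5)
There is a genuine gap, and you have named it yourself: your entire argument funnels through the displayed identity $\spadesuit\bigl(G(b)\,S\,G(b')^{\#}\bigr)=s\cdot\bigl(G(b)u_\lambda\otimes G(b')u_\lambda\bigr)$, which you describe as ``the main obstacle'' and never prove. As stated it is doubtful on the nose: there is no reason the image of a global basis element under the K\"unneth isomorphism should be a \emph{pure} tensor rather than a triangular combination of pure tensors, and you have not resolved the dualization twist you yourself flag (recall $(S G(b')^{\#})^{\#}=G(b')^{\#}S^{\#}$ with $S^{\#}$ the \emph{dual} representation, so at best one expects $s^{\#}$, not $s$, on the right-hand factor). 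Pinning this identity down would essentially require redoing the cell analysis of \cite[Lemma~6.17]{MR2066942} together with the bilinear form computation --- exactly the input that the geometric proof of this Proposition is meant to make dispensable (the paper remarks afterwards that the isomorphism $\spadesuit$ yields the affine cellular structure \emph{without} using \cite{MR2066942} for symmetric $\g$). A secondary unjustified step: you assert that $V(\lambda)$ is free over $R=R(\CC^*\times G_\lambda)$ with basis $\{G(b)u_\lambda\}_{b\in\mathscr B_W(\lambda)}$; the freeness recorded in the literature is for the maximal torus $T_\lambda$, and the implicit identification $G(b)Su_\lambda = s(z)\cdot G(b)u_\lambda$ is again a nontrivial global-basis statement, not a formal one.

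The paper's own proof is entirely different and avoids every one of these computations. Injectivity is formal from the definition of the ideals: if $x\in{}_{\mathcal A}\widetilde\bU[{}^\ge\lambda]$ has $\spadesuit(x)=0$, then since $K^{\CC^*\times G_\lambda}(\La(\lambda))\cong {}_{\mathcal A}V(\lambda)$ (\cite[Th.~2]{MR2074599}), $x$ acts by $0$ on $V(\lambda)$; combined with $x\in\widetilde\bU[{}^\ge\lambda]$ this puts $x$ in $\widetilde\bU[{}^>\lambda]$ by the very definition of these annihilator-type ideals, so $x=0$ in ${}_{\mathcal A}\widetilde\bU[\lambda]$. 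Surjectivity uses only that $V(\lambda)$ is cyclic, ${}_{\mathcal A}\widetilde\bU\cdot u_\lambda={}_{\mathcal A}V(\lambda)$, together with the K\"unneth formula: the image of $({}_{\mathcal A}\widetilde\bU)\,a_\lambda\,({}_{\mathcal A}\widetilde\bU)$ under $\spadesuit$ already fills out $K^{\CC^*\times G_\lambda}(\La(\lambda))\otimes_{R}K^{\CC^*\times G_\lambda}(\La(\lambda))$. No basis is matched to any basis. If you want to salvage your approach, you would either have to prove your displayed identity (at which point you have reproved a large part of \cite{MR2066942}), or replace it by the soft injectivity/surjectivity argument above --- which is the paper's proof.
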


\begin{proof}
    Suppose that $x\in{}_{\mathcal A}\widetilde\bU[{}^\ge\lambda]$
    is sent to $0$ under $\spadesuit$. Since $K^{\CC^*\times
      G_\lambda}(\La(\lambda))$ is isomorphic to ${}_{\mathcal
      A}V(\lambda)$ (\cite[Th.~2]{MR2074599}), it means that $x$ acts
    $0$ on $V(\lambda)$. From the definition, we have
    $x\in{}_{\mathcal A}\widetilde\bU[{}^>\lambda]$. Therefore $x=0$
    in ${}_{\mathcal A}\bU[\lambda]$. Thus $\spadesuit$ is injective.

    It is known that $K^{\CC^*\times G_\lambda}(\La(\lambda))$, and
    hence $V(\lambda)$, is cyclic, that is ${}_{\mathcal A}\widetilde
    \bU\cdot u_\lambda = {}_{\mathcal A}V(\lambda)$. (This is also a
    direct consequence of the definition of the extremal weight module
    ${}_{\mathcal A}V(\lambda)$.) Now it follows that $({}_{\mathcal
      A}\widetilde \bU) a_\lambda ({}_{\mathcal A}\widetilde \bU)$
    surjects to 
\(    
    K^{\CC^*\times G_\lambda}(\La(\lambda))\otimes_{R(\CC^*\times G_\lambda)}
    K^{\CC^*\times G_\lambda}(\La(\lambda))
\)
under $\spadesuit$.
\end{proof}

Combining this proposition with the above diagram, we now see that the
affine cellular structure on ${}_{\mathcal A}\widetilde\bU$ can be
proved without using \cite{MR2066942} for symmetric $\g$.
\end{NB}

\bibliographystyle{myamsplain}
\bibliography{nakajima,mybib,note}

\def\cprime{$'$} \def\cprime{$'$} \def\cprime{$'$} \def\cprime{$'$}
\providecommand{\bysame}{\leavevmode\hbox to3em{\hrulefill}\thinspace}
\providecommand{\MR}{\relax\ifhmode\unskip\space\fi MR }
% \MRhref is called by the amsart/book/proc definition of \MR.
\providecommand{\MRhref}[2]{%
  \href{http://www.ams.org/mathscinet-getitem?mr=#1}{#2}
}
\providecommand{\href}[2]{#2}
\begin{thebibliography}{10}

\bibitem{MR2066942}
J.~Beck and H.~Nakajima, \emph{Crystal bases and two-sided cells of quantum
  affine algebras}, Duke Math. J. \textbf{123} (2004), no.~2, 335--402.
  \MR{MR2066942 (2005e:17020)}

\bibitem{CG}
N.~Chriss and V.~Ginzburg, \emph{Representation theory and complex geometry},
  Birkh\"auser Boston Inc., Boston, MA, 1997. \MR{MR1433132 (98i:22021)}

\bibitem{Cui}
W.~{Cui}, \emph{{Affine cellularity of BLN-algebras}}, ArXiv e-prints (2014),
  \href{http://arxiv.org/abs/1405.6441}{{\ttfamily arXiv:1405.6441 [math.RT]}}.

\bibitem{Kato-ext}
S.~{Kato}, \emph{{An algebraic study of extension algebras}}, ArXiv e-prints
  (2012), \href{http://arxiv.org/abs/1207.4640}{{\ttfamily arXiv:1207.4640
  [math.RT]}}.

\bibitem{KX}
S.~Koenig and C.~Xi, \emph{Affine cellular algebras}, Adv. Math. \textbf{229}
  (2012), no.~1, 139--182. \MR{2854173}

\bibitem{Mac}
I.~G. Macdonald, \emph{Symmetric functions and {H}all polynomials}, second ed.,
  Oxford Mathematical Monographs, The Clarendon Press Oxford University Press,
  New York, 1995, With contributions by A. Zelevinsky, Oxford Science
  Publications. \MR{1354144 (96h:05207)}

\bibitem{BLN}
K.~McGerty, \emph{Generalized {$q$}-{S}chur algebras and quantum {F}robenius},
  Adv. Math. \textbf{214} (2007), no.~1, 116--131. \MR{2348025 (2008i:20057)}

\bibitem{Na-qaff}
H.~Nakajima, \emph{Quiver varieties and finite-dimensional representations of
  quantum affine algebras}, J. Amer. Math. Soc. \textbf{14} (2001), no.~1,
  145--238 (electronic). \MR{MR1808477 (2002i:17023)}

\bibitem{Na-tensor}
\bysame, \emph{Quiver varieties and tensor products}, Invent. Math.
  \textbf{146} (2001), no.~2, 399--449. \MR{MR1865400 (2003e:17023)}

\bibitem{MR2058969}
\bysame, \emph{Cells in quantum affine algebras}, Proceedings of the
  International Conference on Algebra, vol.~11, 2004, pp.~141--154.
  \MR{MR2058969 (2005e:17025)}

\bibitem{MR2074599}
\bysame, \emph{Extremal weight modules of quantum affine algebras},
  Representation theory of algebraic groups and quantum groups, Adv. Stud. Pure
  Math., vol.~40, Math. Soc. Japan, Tokyo, 2004, pp.~343--369. \MR{MR2074599
  (2005g:17036)}

\bibitem{MR2235345}
T.~Tanisaki and N.~Xi, \emph{Kazhdan-{L}usztig basis and a geometric filtration
  of an affine {H}ecke algebra}, Nagoya Math. J. \textbf{182} (2006), 285--311.
  \MR{2235345 (2007g:20009)}

\bibitem{VV-can}
M.~Varagnolo and E.~Vasserot, \emph{Canonical bases and quiver varieties},
  Represent. Theory \textbf{7} (2003), 227--258 (electronic). \MR{1990661
  (2004i:17012)}

\end{thebibliography}

\end{document}